\newcommand{\Sym}{\mathop{\mathcal S}\nolimits}
\newcommand{\inv}{^{-1}}
\newtheorem{Thm}{Theorem}
\newtheorem{Lemma}[Thm]{Lemma}
{\theoremstyle{definition}
}
{\theoremstyle{remark}
\newtheorem{Rmk}{Remark}}
\theoremstyle{remark}
\theoremstyle{remark}
\theoremstyle{remark}
\theoremstyle{remark}
\theoremstyle{remark}
\title{On the Burnside-Brauer-Steinberg theorem}
\author{Benjamin Steinberg}
\address{%
    Department of Mathematics\\
    City College of New York\\
    Convent Avenue at 138th Street\\
    New York, New York 10031\\
    USA}
\email{bsteinberg@ccny.cuny.edu}
\thanks{This work was partially supported by a grant from the Simons Foundation(\#245268
to Benjamin Steinberg), the Binational Science Foundation of Israel and the US (\#2012080 to Benjamin Steinberg) and by a CUNY Collaborative Incentive Research Grant.}
\date{\today}
\keywords{monoids, representation theory, characters, tensor products, symmetric powers}
\subjclass[2010]{20M30,20C15,16G99,16T10}
\begin{document}

\begin{abstract}
A well-known theorem of Burnside says that if $\rho$ is a faithful representation of a finite group $G$ over a field of characteristic $0$, then every irreducible representation of $G$ appears as a constituent of a tensor power of $\rho$.  In 1962, R.~Steinberg gave a module theoretic proof that simultaneously removed the constraint on the characteristic, and allowed the group to be replaced by a monoid.  Brauer subsequently simplified Burnside's proof and, moreover, showed that if the character of $\rho$ takes on $r$ distinct values, then the first $r$ tensor powers of $\rho$ already contain amongst them all of the irreducible representations of $G$ as constituents.  In this note we prove the analogue of Brauer's result for finite monoids.  We also prove the corresponding result for the symmetric powers of a faithful representation.
\end{abstract}

\maketitle

\section{Introduction}
A famous result of Burnside~\cite{Burnsidebook} states that if $K$ is a field of characteristic $0$, $G$ is a finite group and $V$ is a finite dimensional $KG$-module affording a faithful representation of $G$, then each simple $KG$-module is a composition factor of a tensor power $V^{\otimes i}$ of $V$. Burnside's original proof~\cite{Burnsidebook} was via characters and formal power series.  This result was vastly generalized by R.~Steinberg in 1962~\cite{RSteinberg}.  He showed that if $K$ is any field, $M$ is any monoid (possibly infinite) and $V$ is a $KM$-module affording a faithful representation of $M$, then the tensor algebra $T(V)=\bigoplus_{i=0}^{\infty} V^{\otimes i}$ is a faithful $KM$-module (i.e., its annihilator in $KM$ is $0$).  This easily implies that if $M$ is finite and $V$ is finite dimensional, then every simple $KM$-module is a composition factor of some tensor power of $V$ (in fact one of the first $|M|$).  Rieffel extended this result even further to bialgebras~\cite{Rieffelburn}; see also~\cite{Passman,Passman2}.

In 1964, Brauer gave a simpler character-theoretic proof of Burnside's theorem and at the same time refined it~\cite{BrauerBurn}.  Namely, he showed that if $G$ is a finite group, $K$ is a field of characteristic $0$ and $V$ is a finite dimensional $KG$-module affording a faithful representation of $G$ whose character takes on $r$ distinct values, then every simple $KG$-module is a composition factor of one of the first $r$ tensor powers of $V$.  Because of this refinement, Burnside's result is often referred to as the Burnside-Brauer theorem.

It is natural to ask whether R.~Steinberg's theorem can be similarly refined: is it true that if $V$ is a finite dimensional $KM$-module affording a faithful representation of a finite monoid $M$ over a field $K$ of characteristic $0$ and that the character of $V$ takes on only $r$ distinct values, then every simple $KM$-module is a composition factor of one of $V^{\otimes 0},\ldots, V^{\otimes (r-1)}$?

This note answers the above question affirmatively. On the other hand,  we also show that the minimal $k$ such that  $\bigoplus_{i=0}^{k} V^{\otimes i}$ is a faithful $KM$-module cannot be bounded as a function of solely the number of distinct values assumed by the character of $V$, as is the case for finite groups.

Brauer's proof~\cite{BrauerBurn} relies on the orthogonality relations for group characters.  The irreducible characters of a finite monoid do not form an orthogonal set with respect to the natural inner product on mappings $M\to K$. So we have to adopt a slightly different tactic.  Instead of using the orthogonality relations, we apply the character of $V^{\otimes i}$ to carefully chosen primitive idempotents.  To make Brauer's argument work, we also need to apply at a key moment a small part of the structure theory of irreducible representations of finite monoids, cf.~\cite{RhodesZalc,LallePet,gmsrep} and~\cite[Chapter~5]{CP}.

A detailed study of the minimal degree a faithful representation of a finite monoid was undertaken by the author and Mazorchuk in~\cite{effective}.

It is also known that if $V$ is a finite dimensional $KG$-module affording a faithful representation of a finite group $G$ over a field of characteristic $0$, then every simple $KG$-module is a composition factor of a symmetric power $\Sym^n(V)$ of $V$, cf.~\cite{etingof}.  We prove the corresponding result for monoids and give a bound on how many symmetric powers are needed in terms of $\dim V$ and the number of distinct characteristic polynomials of the linear operators associated to elements of $M$ acting on $V$.  These kinds of results for representations of finite monoids over finite fields can be found in~\cite{genericrep,Kuhn2}.

\section{Tensor powers}
We follow mostly here the terminology of the book of Curtis and Reiner~\cite{curtis}, which will also serve as our primary reference on the representation theory of finite groups and finite dimensional algebras.

Let $K$ be a field, $A$ a finite dimensional $K$-algebra, $S$ a simple $A$-module and $V$ a finite dimensional $A$-module.   We denote by $(V:S)$ the multiplicity of $S$ as a composition factor of $V$. Recall that $S\cong Ae/Re$ where $R$ is the radical of $A$ and $e\in A$ is a primitive idempotent, cf.~\cite[Corollary~54.13]{curtis}.  (An idempotent $e$ is \emph{primitive} if whenever $e=e_1+e_2$ with $e_1,e_2$ orthogonal idempotents, then either $e_1=0$ or $e_2=0$.)
To prove the main result, we need two lemmas about finite dimensional algebras.  The first is the content of~\cite[Theorem~54.12]{curtis}.

\begin{Lemma}\label{multiplicity}
Let $K$ be a field and $A$ a finite dimensional $K$-algebra with radical $R$.  Let $S$ be a simple $A$-module, $e\in A$ a primitive idempotent with $S\cong Ae/Re$ and $V$ a finite dimensional $A$-module.  Then $(V:S)>0$ if and only if $eV\neq 0$.
\end{Lemma}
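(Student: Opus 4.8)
The plan is to exploit the exactness of the functor $V\mapsto eV$ on $\module{A}$. First I would record the natural isomorphism $eV\cong \Hom_A(Ae,V)$, sending $x=ex\in eV$ to the $A$-linear map $ae\mapsto aex$; since $A=Ae\oplus A(1-e)$ as left $A$-modules, $Ae$ is projective, so $\Hom_A(Ae,-)$, and hence $V\mapsto eV$, is exact. Consequently, given any composition series $0=V_0\subset V_1\subset\cdots\subset V_n=V$ with factors $S_i=V_i/V_{i-1}$, applying $e\cdot(-)$ produces a filtration $0=eV_0\subseteq eV_1\subseteq\cdots\subseteq eV_n=eV$ whose successive quotients are the $eS_i$. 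In particular $\dim_K eV=\sum_{i=1}^n \dim_K eS_i$, so $eV\neq 0$ if and only if $eS_i\neq 0$ for at least one $i$.

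The second step is to compute $eT$ for $T$ a simple $A$-module, where I claim $eT\neq 0$ if and only if $T\cong S$. Any homomorphism $f\colon Ae\to T$ satisfies $f(Re)=R\cdot f(Ae)\subseteq RT$, and $RT=0$ because $R$ is nilpotent (so $RT=T$ would force $T=R^mT=0$) while $RT$ is a submodule of the simple module $T$; thus $f$ factors through the canonical surjection $Ae\twoheadrightarrow Ae/Re\cong S$. Hence $\Hom_A(Ae,T)\cong \Hom_A(S,T)$, which by Schur's lemma is nonzero exactly when $T\cong S$. Combining with $eT\cong \Hom_A(Ae,T)$ gives the claim.

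Putting the two steps together, $eV\neq 0$ if and only if some composition factor $S_i$ of $V$ is isomorphic to $S$, and by the Jordan--H\"older theorem this is precisely the assertion that $(V:S)>0$. This argument is essentially formal, so there is no serious obstacle; the only point requiring a little care is the bookkeeping in the first step, namely that applying $e\cdot(-)$ to a composition series yields a filtration with quotients $eS_i$, which is immediate once the exactness of $V\mapsto eV$ has been established. (This is the standard fact recorded as \cite[Theorem~54.12]{curtis}.)
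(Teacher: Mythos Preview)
Your argument is correct. The paper does not supply its own proof of this lemma; it simply cites \cite[Theorem~54.12]{curtis}, and your write-up is exactly the standard proof of that result (exactness of $V\mapsto eV\cong\Hom_A(Ae,V)$ via projectivity of $Ae$, together with $\Hom_A(Ae,T)\cong\Hom_A(Ae/Re,T)$ for simple $T$).
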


The second lemma on finite dimensional algebras concerns the connection between primitive idempotents for an algebra and its corners.  We recall that if $A$ is a finite dimensional algebra with radical $R$ and $e\in A$ is an idempotent, then $eRe$ is the radical of $eAe$~\cite[Theorem~54.6]{curtis}.

\begin{Lemma}\label{corner}
Let $A$ be a finite dimensional $K$-algebra with radical $R$ and let $e\in A$ be an idempotent.  Suppose that $S$ is a simple $A$-module such that $eS\neq 0$.  Then $eS$ is a simple $eAe$-module and, moreover, if $f\in eAe$ is a primitive idempotent with $eAef/eRef\cong eS$, then $f$ is a primitive idempotent of $A$ and $Af/Rf\cong S$.
\end{Lemma}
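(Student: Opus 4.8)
The plan is to analyze the functor $V\mapsto eV$ from $A$-modules to $eAe$-modules, exploiting the fact recalled above that $eRe$ is the radical of $eAe$, together with Lemma~\ref{multiplicity}. For the first assertion, take any nonzero $x\in eS$. Since $x\in eS$ we have $x=ex$, and since $S$ is simple we have $Ax=S$; hence $eS=e(Ax)=(eA)x=(eAe)x$, where the last equality uses $x=ex$ to rewrite $eax=ea(ex)=(eae)x$. Thus every nonzero element of $eS$ generates $eS$ over $eAe$, so $eS$ is a simple $eAe$-module.

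Next I would verify that $f$ is primitive in $A$. Suppose $f=f_1+f_2$ with $f_1,f_2$ orthogonal idempotents of $A$. A routine computation with the orthogonality relations gives $f_i=ff_if$ for $i=1,2$, and since $f\in eAe$ we have $f=ef=fe$, whence $f_i=ef_ie\in eAe$. Therefore $f=f_1+f_2$ is a decomposition of $f$ into orthogonal idempotents inside $eAe$, and primitivity of $f$ in $eAe$ forces $f_1=0$ or $f_2=0$; so $f$ is a primitive idempotent of $A$, and consequently $Af/Rf$ is a simple $A$-module.

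It then remains to identify $Af/Rf$ with $S$, and the crux is the claim that $fS\neq 0$. Since $f=fe$ we have $fS=f(eS)$; by the first part $eS$ is a simple $eAe$-module, and $f$ is a primitive idempotent of $eAe$ with $eAef/eRef\cong eS$. Applying Lemma~\ref{multiplicity} to the algebra $eAe$ (whose radical is $eRe$), to the simple module $eS$, and to the module $V=eS$ itself, we conclude $f(eS)\neq 0$ because $(eS:eS)=1>0$. Hence $\Hom_A(Af,S)\cong fS\neq 0$ since $Af$ is a projective $A$-module, so there is a surjective homomorphism $Af\to S$. As $R$ annihilates the simple module $S$, this surjection kills $Rf=R\cdot Af$ and therefore factors through a surjection $Af/Rf\to S$; since $Af/Rf$ is simple, $S\cong Af/Rf$.

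The only substantive input beyond routine bookkeeping is the application of Lemma~\ref{multiplicity} over the corner algebra $eAe$ to get $fS\neq 0$; the remaining checks ($x=ex$ for $x\in eS$, the identity $f_i=ff_if$, the isomorphism $\Hom_A(Af,S)\cong fS$, and that $R$ kills $S$) are all standard. I expect the only point requiring care is confirming that the triple $(eAe,\,f,\,eS)$ genuinely satisfies the hypotheses of Lemma~\ref{multiplicity}.
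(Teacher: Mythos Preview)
Your proof is correct and follows essentially the same approach as the paper's: the simplicity of $eS$, the primitivity of $f$ in $A$ via $f_i=ff_if\in eAe$, and the key step $fS=f(eS)\neq 0$ obtained by applying Lemma~\ref{multiplicity} over the corner $eAe$ all match the paper's argument. The only cosmetic difference is that for the final identification $Af/Rf\cong S$ the paper invokes Lemma~\ref{multiplicity} a second time (over $A$) to get $(S:Af/Rf)>0$, whereas you use the isomorphism $\Hom_A(Af,S)\cong fS$ directly; both are standard and equivalent.
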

\begin{proof}
If $v\in eS$ is a nonzero vector, then $eAev=eAv=eS$ because $S$ is a simple $A$-module. Thus $eS$ is a simple $eAe$-module.  Let $f\in eAe$ be as above.  If $f=e_1+e_2$ with $e_1,e_2$ orthogonal idempotents in $A$, then $ee_ie=efe_ife=fe_if=e_i$ for $i=1,2$ and so $e_1,e_2\in eAe$.  Thus one of $e_1,e_2$ is $0$ by primitivity of $f$ in $eAe$ and hence $f$ is primitive in $A$.  Finally, since $(eS:eAef/eRef)=1$, we have by Lemma~\ref{multiplicity} that $0\neq feS=fS$ and so $(S:Af/Rf)>0$ by another application of Lemma~\ref{multiplicity}. Since $S$ is simple, we deduce that $S\cong Af/Rf$, as required.
\end{proof}

Next we need a lemma about idempotents of group algebras.

\begin{Lemma}\label{groupalgidemp}
Let $G$ be a finite group and $K$ a field of characteristic $0$.  Suppose that $e=\sum_{g\in G}c_gg$ in $KG$ is a nonzero idempotent.  Then $c_1\neq 0$.
\end{Lemma}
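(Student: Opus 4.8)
The plan is to exploit the fact that over a field of characteristic $0$ the group algebra $KG$ carries a faithful trace, namely the map $\tau\colon KG\to K$ sending $\sum_g c_g g$ to $c_1$ (the coefficient of the identity). The key observations about $\tau$ are: (i) it is the trace of the left regular representation divided by $|G|$, so $\tau(x) = \frac{1}{|G|}\operatorname{tr}(\lambda_x)$ where $\lambda_x$ is left multiplication by $x$ on $KG$; and (ii) consequently, for an idempotent $e$, $\operatorname{tr}(\lambda_e)$ is the rank of the projection $\lambda_e$, which is $\dim(KGe)$. Since $e\neq 0$ implies $KGe\neq 0$, we get $\operatorname{tr}(\lambda_e) = \dim(KGe) > 0$, and this is a positive integer (here characteristic $0$ is essential so that a nonzero integer dimension is nonzero in $K$). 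Therefore $c_1 = \tau(e) = \frac{1}{|G|}\dim(KGe) \neq 0$.

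Concretely, the steps I would carry out are: first, identify $c_1$ with $\tau(e)$ directly from the definition. Second, verify that $\tau(x)=\frac{1}{|G|}\operatorname{tr}(\lambda_x)$ by computing $\operatorname{tr}(\lambda_x)$ in the basis $G$: the $g$-th diagonal entry of $\lambda_x$ is the coefficient of $g$ in $xg$, which equals $c_{1}$ when $x=\sum c_h h$ (since the coefficient of $g$ in $hg$ is $1$ iff $h=1$), so summing over the $|G|$ basis elements gives $|G|c_1$. Third, since $e^2=e$, the operator $\lambda_e$ is idempotent, hence diagonalizable with eigenvalues $0$ and $1$, so $\operatorname{tr}(\lambda_e)$ equals the dimension of its image $KGe$. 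Fourth, $e\neq 0$ forces $e = e\cdot e \in KGe$, so $KGe\neq 0$, hence $\dim(KGe)\geq 1$. Finally, $c_1 = \dim(KGe)/|G|$, a positive rational number, which is nonzero in $K$ because $\operatorname{char} K = 0$.

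I do not anticipate a serious obstacle here; the only point requiring a little care is making sure the identification $c_1 = \frac{1}{|G|}\operatorname{tr}(\lambda_e)$ is stated cleanly, and emphasizing where characteristic $0$ enters — namely in concluding that a nonzero integer remains nonzero in $K$, so that the positivity of $\dim(KGe)$ actually yields $c_1\neq 0$. An alternative, equally short route avoiding the regular representation is to note that $KG$ is semisimple by Maschke's theorem, write $e$ as a sum of primitive idempotents, reduce to $e$ primitive so that $KGe$ is a simple module $S$, and then observe $c_1 = \tau(e) = \frac{1}{|G|}\chi_{\mathrm{reg}}(e) = \frac{\dim S}{|G|}(\chi_S,\ \text{something})$; but the trace-of-regular-representation computation is cleaner and does not even require semisimplicity, so I would present that one.
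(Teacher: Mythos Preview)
Your proof is correct and is essentially the same as the paper's: both compute the trace of the regular representation at $e$, obtaining $|G|\,c_1$ on one side and the dimension of the ideal generated by $e$ on the other. The only cosmetic difference is that the paper uses the right ideal $eKG$ while you use the left ideal $KGe$.
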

\begin{proof}
Because $e\neq 0$, we have $\dim eKG>0$.  Let $\theta$ be the character of the regular representation of $G$ over $K$, which we extend linearly to $KG$.  Then \[\dim eKG=\theta(e) = \sum_{g\in G}c_g\theta(g) =c_1\cdot |G|\] since
\[\theta(g) = \begin{cases} |G|, & \text{if}\ g=1\\ 0, & \text{else.}\end{cases}\] Therefore, $c_1=(\dim eKG)/|G|\neq 0$.
\end{proof}

Let $M$ be a finite monoid and $K$ a field.  If $V$ is a finite dimensional $KM$-module, then $\theta_V\colon M\to K$ will denote the character of $V$. Sometimes it will be convenient to extend $\theta_V$ linearly to $KM$.
Note that $V^{\otimes i}$ is a $KM$-module by defining \[m(v_1\otimes\cdots\otimes v_i)=mv_1\otimes\cdots\otimes mv_i\] for $m\in M$.  By convention $V^{\otimes 0}$ is the trivial $KM$-module.
One has, of course, that $\theta_{V\otimes W}=\theta_V\cdot \theta_W$ and that the character of the trivial module is identically $1$.  Therefore, $\theta_{V^{\otimes i}}=\theta_V^i$ for all $i\geq 0$.
The following is a monoid analogue of a well-known fact for groups.

\begin{Lemma}\label{charkernel}
Let $M$ be a finite monoid, $K$ a field of characteristic $0$ and $\rho\colon M\to M_n(K)$ a representation affording the character $\theta$.  Then $\rho(m)=I$ if and only if $\theta(m)=n$.
\end{Lemma}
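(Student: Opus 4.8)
The forward implication is immediate: if $\rho(m)=I$ then $\theta(m)=\mathrm{tr}(I_n)=n$. For the converse the plan is to run the eigenvalue argument underlying the classical group statement, but one must first isolate the part of $\rho(m)$ caused by $m$ not being invertible. Put $A=\rho(m)$. Since $M$ is finite, the cyclic submonoid generated by $m$ is finite, so there are integers $i\geq 1$ and $p\geq 1$ with $m^i=m^{i+p}$; hence $A^i=A^{i+p}$, that is, $A^i(A^p-I)=0$. Consequently the minimal polynomial of $A$ divides $x^i(x^p-1)$, and since $\mathrm{char}\,K=0$ the polynomial $x^p-1$ has simple roots and is coprime to $x^i$.

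Next I would apply the primary (Fitting) decomposition over an algebraic closure $\overline K$. The divisibility just noted shows that $A$ is similar over $\overline K$ to a block matrix $\mathrm{diag}(N,D)$, where $N$ is nilpotent and $D$ is a diagonal matrix whose diagonal entries $\zeta_1,\dots,\zeta_d$ are $p$-th roots of unity: the restriction of the minimal polynomial to the generalized $\zeta$-eigenspace for $\zeta\neq 0$ is $x-\zeta$ because $x^p-1$ has simple roots, so that space is honestly an eigenspace, while $N$ collects the Jordan blocks at the eigenvalue $0$. Here $0\leq d\leq n$, and, since $\mathrm{tr}(N)=0$,
\[
\theta(m)=\mathrm{tr}(A)=\mathrm{tr}(D)=\zeta_1+\cdots+\zeta_d .
\]

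Finally I would bring in an archimedean estimate. Replacing $K$ by the subfield generated by the entries of all the matrices $\rho(m)$, $m\in M$, alters neither $\theta(m)$ nor the truth of the equation $\rho(m)=I$; as this subfield is finitely generated of characteristic zero it embeds into $\mathbb C$, so we may assume $K\subseteq\mathbb C$, and then $\zeta_1,\dots,\zeta_d$ may be taken to be $p$-th roots of unity in $\mathbb C$. Suppose now that $\theta(m)=n$. Then
\[
n=\bigl|\zeta_1+\cdots+\zeta_d\bigr|\leq|\zeta_1|+\cdots+|\zeta_d|=d\leq n,
\]
forcing $d=n$ — so the nilpotent block $N$ is absent and $A$ is invertible — and forcing equality throughout the triangle inequality. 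Since the $\zeta_j$ all have modulus $1$, this equality gives $\zeta_1=\cdots=\zeta_d$, and then $d\zeta_1=n=d$ yields $\zeta_1=1$. Hence $A$ is diagonalizable with every eigenvalue equal to $1$, so $A=I$, i.e.\ $\rho(m)=I$.

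I expect the only genuinely non-group-theoretic point to be the very first move: $m$ need not be invertible, so $\rho(m)$ may carry a nilpotent summand, and this summand — being invisible to the trace — must be peeled off before the Burnside--Brauer reasoning (diagonalizability of a finite-order matrix together with the triangle inequality for sums of roots of unity) can be applied to what remains. Everything after that is the classical computation.
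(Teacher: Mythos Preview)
Your proof is correct and follows essentially the same line as the paper's: both observe that finiteness of $M$ forces the minimal polynomial of $\rho(m)$ to divide $x^i(x^p-1)$, so nonzero eigenvalues are roots of unity, and then invoke the standard archimedean argument that a sum of at most $n$ roots of unity equals $n$ only when all $n$ eigenvalues are $1$. The paper is terser---it defers the eigenvalue inequality to \cite[Corollary~30.11]{curtis} and finishes with ``unipotent of finite order in characteristic $0$ implies identity'' rather than your explicit diagonalizability observation---but your more detailed treatment (Fitting decomposition, explicit embedding of the finitely generated subfield into $\mathbb C$, triangle-inequality equality case) is a faithful unpacking of the same argument.
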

\begin{proof}
If $\rho(m)=I$, then trivially $\theta(m)=n$.  Suppose that $\theta(m)=n$.  Because $M$ is finite, there exist $r,s>0$ such that $m^r=m^{r+s}$.  Then the minimal polynomial of $\rho(m)$ divides $x^r(x^s-1)$ and so each nonzero eigenvalue of $\rho(m)$ is a root of unity (in an algebraic closure of $K$).  Now the proof proceeds analogously to the case of finite groups, cf.~\cite[Corollary~30.11]{curtis}. That is, $\theta(m)$ is a sum of at most $n$ roots of unity and hence can only be equal to $n$ if all the eigenvalues of $\rho(m)$ are $1$.  But then $\rho(m)$ is both unipotent and of finite order, and hence $\rho(m)=I$ as $K$ is of characteristic $0$.
\end{proof}

We shall now need to apply a snippet of the structure theory for irreducible representations of finite monoids. Details can be found in~\cite[Chapter~5]{CP} or~\cite{RhodesZalc}; a simpler approach was given in~\cite{gmsrep}.
Let $M$ be a finite monoid and $e\in M$ an idempotent.  Denote by $G_e$ the group of units of the monoid $eMe$.  It is well known that $I_e=eMe\setminus G_e$ is an ideal of $eMe$, i.e., $(eMe)I_e(eMe)=I_e$; see, for instance,~\cite[Proposition~1.2]{TilsonXI} in Eilenberg~\cite{Eilenberg}.

\begin{Lemma}\label{restrict}
Let $M$ be a monoid and $K$ a field. Let $e\in M$ be an idempotent and let $V$ be a finite dimensional $KM$-module.  Then $(\theta_V)|_{eMe}=\theta_{eV}$.
\end{Lemma}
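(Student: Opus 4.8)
The plan is to exploit the fact that $e$ acts on $V$ as an idempotent operator, hence a projection, and to read off the character of $eV$ from a block decomposition of $V$. Let $\rho$ denote the representation of $M$ on $V$. Since $e^2=e$, the operator $\rho(e)$ is idempotent, so $V=eV\oplus W$ as $K$-vector spaces, where $W=\ker\rho(e)$; moreover $\rho(e)$ restricts to the identity on $eV$ and to $0$ on $W$, and $eV=\{v\in V: ev=v\}$. The monoid $eMe$ acts on $eV$, with an element $eme\in eMe$ acting by the restriction to $eV$ of $\rho(eme)$ (note $\rho(eme)(eV)=\rho(e)\rho(m)(eV)\subseteq eV$).

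The key computation is to put $\rho(eme)$, for a fixed $m\in M$, in block form with respect to $V=eV\oplus W$. For $w\in W$ one gets $\rho(eme)w=\rho(e)\rho(m)\rho(e)w=0$, while for $w\in eV$ one gets $\rho(eme)w=\rho(e)\rho(m)\rho(e)w\in eV$. Hence $\rho(eme)$ is block diagonal, the block on $W$ being $0$ and the block on $eV$ being precisely the operator giving the $eMe$-action of $eme$. Taking traces,
\[
\theta_V(eme)=\mathrm{tr}\,\rho(eme)=\mathrm{tr}\bigl(\rho(eme)|_{eV}\bigr)=\theta_{eV}(eme),
\]
which is the asserted equality $(\theta_V)|_{eMe}=\theta_{eV}$.

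There is no real obstacle here, and no hypothesis on the characteristic of $K$ is needed. The only point meriting care is that, although $eV$ is not an $M$-submodule of $V$, every element of $eMe$ annihilates the complementary subspace $W=\ker\rho(e)$, so the off-diagonal blocks vanish and the trace of $\rho(eme)$ splits as the trace of its restriction to $eV$ plus zero.
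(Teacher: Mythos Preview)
Your proof is correct and follows essentially the same approach as the paper: the paper decomposes $V=eV\oplus (1-e)V$ and observes that $eMe$ annihilates $(1-e)V$ and preserves $eV$, which is exactly your argument with $W=\ker\rho(e)=(1-\rho(e))V$. Your version simply spells out the block-diagonal computation in more detail.
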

\begin{proof}
There is a vector space direct sum decomposition $V=eV\oplus (1-e)V$. As $eMe$ annihilates $(1-e)V$ and preserves $eV$, the result follows.
\end{proof}

Let $S$ be a simple $KM$-module with $K$ a field.  An idempotent $e\in M$ is called an \emph{apex} for $S$ if $eS\neq 0$ and $I_eS=0$.  By classical results of  Munn~\cite{Munn1} and Ponizovsky~\cite{Poni}, each simple $KM$-module has an apex; see~\cite[Theorem~5]{gmsrep} or~\cite[Theorem~5.33]{CP}.  The apex is unique up to  $\mathscr J$-equivalence of idempotents, although this fact is not relevant here.  We are now ready to prove our refinement of R.~Steinberg's theorem~\cite{RSteinberg}.

\begin{Thm}\label{main}
Let $M$ be a finite monoid and $K$ a field of characteristic $0$. Let $V$ be a finite dimensional $KM$-module affording a faithful representation of $M$.  Suppose that the character $\theta$ of $V$ takes on $r$ distinct values.   Then every simple $KM$-module is a composition factor of $V^{\otimes i}$ for some $0\leq i\leq r-1$.
\end{Thm}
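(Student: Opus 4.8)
The plan is to follow Brauer's argument, but since the irreducible characters of $M$ need not be orthogonal, in place of the orthogonality relations I will evaluate the characters $\theta_{V^{\otimes i}}=\theta^{i}$ at a primitive idempotent attached to a simple module together with an apex. Fix a simple $KM$-module $S$ and an apex $e$ for $S$, so $eS\neq 0$ and $I_{e}S=0$. Then $eS$ is a simple $e(KM)e=K[eMe]$-module by Lemma~\ref{corner}, so I may choose a primitive idempotent $f\in K[eMe]$ with $K[eMe]f/\rad(K[eMe])f\cong eS$; by Lemma~\ref{corner} this $f$ is also primitive in $KM$ and satisfies $KMf/Rf\cong S$, where $R=\rad(KM)$. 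Write $f=\sum_{m\in eMe}c_{m}m$. Since $f$ acts on each $V^{\otimes i}$ as an idempotent linear operator, $\theta^{i}(f)=\dim fV^{\otimes i}$, so by Lemma~\ref{multiplicity} the theorem reduces to showing that for each such $S$ there is some $0\le i\le r-1$ with
\[\theta^{i}(f)=\sum_{m\in eMe}c_{m}\,\theta(m)^{i}\neq 0.\]

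For the polynomial trick, put $d=\dim eV=\theta(e)$ and $N=\{m\in eMe: m\text{ acts as the identity on }eV\}$; by Lemmas~\ref{restrict} and~\ref{charkernel}, applied to the $d$-dimensional representation of $eMe$ on $eV$, we have $N=\{m\in eMe:\theta(m)=d\}$. Let $d=\delta_{1},\delta_{2},\dots,\delta_{t}$ be the distinct values of $\theta$ on $eMe$; these occur among the $r$ values of $\theta$ on $M$, so $t\le r$. Setting $h(x)=\prod_{j=2}^{t}(x-\delta_{j})=\sum_{i=0}^{t-1}b_{i}x^{i}$, one has $h(d)\neq 0$, while $h(\theta(m))$ equals $h(d)$ for $m\in N$ and $0$ for $m\in eMe\setminus N$; multiplying by $c_{m}$ and summing over $eMe$ gives
\[\sum_{i=0}^{t-1}b_{i}\,\theta^{i}(f)=h(d)\sum_{m\in N}c_{m}.\]
Hence it suffices to prove that $\sum_{m\in N}c_{m}\neq 0$, for then some $\theta^{i}(f)$ with $0\le i\le t-1\le r-1$ is nonzero.

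The hard part, and where faithfulness of $V$ and the structure theory enter, is to show $\sum_{m\in N}c_{m}\neq 0$. First I would check that $N$ is a normal subgroup of $G_{e}$: if $m\in N$, then some power $m^{k}$ with $k\geq 1$ is an idempotent of $eMe$ and still acts as the identity on $eV$; comparing the images and kernels of the idempotent operators $m^{k}$ and $e$ on $V$ and using that $V$ is faithful forces $m^{k}=e$, so $m$ is a unit of $eMe$. Thus $N\le G_{e}$, and being the kernel of the homomorphism $G_{e}\to\mathrm{GL}(eV)$, it is normal. Next, $N$ acts trivially on $eS$: by R.~Steinberg's theorem~\cite{RSteinberg}, $S$ is a composition factor of some $V^{\otimes i}$, so $eS$ is a composition factor of $eV^{\otimes i}=(eV)^{\otimes i}$ over $K[eMe]$; since $N$ acts trivially on $eV$ it acts trivially on this tensor power, hence on $eS$. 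Finally, let $\pi\colon K[eMe]\to KG_{e}$ be the quotient by the ideal $KI_{e}$ (note $\rad(K[eMe])\subseteq KI_{e}$, since $KG_{e}$ is semisimple); one checks that $\pi(f)=\sum_{g\in G_{e}}c_{g}\,g$ is a nonzero idempotent of $KG_{e}$ with $KG_{e}\pi(f)\cong eS$. As $N$ is normal, $\varepsilon_{N}=\frac{1}{|N|}\sum_{g\in N}g$ is a central idempotent of $KG_{e}$ acting as the identity on every $KG_{e}$-module on which $N$ acts trivially, in particular on $eS\cong KG_{e}\pi(f)$; hence $\varepsilon_{N}\pi(f)=\pi(f)\neq 0$, so the image of $\pi(f)$ in $K[G_{e}/N]$ is a nonzero idempotent, and by Lemma~\ref{groupalgidemp} its coefficient at the identity of $G_{e}/N$ --- which is precisely $\sum_{m\in N}c_{m}$ --- is nonzero. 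I expect the identification of $N$ (that it is a normal subgroup of the maximal subgroup $G_{e}$ acting trivially on $eS$, so that $f$ survives the passage $K[eMe]\to KG_{e}\to K[G_{e}/N]$) to be the main obstacle; the rest is a bookkeeping adaptation of Brauer's computation, with his single identity coefficient replaced by the mass $\sum_{m\in N}c_{m}$ furnished by the apex.
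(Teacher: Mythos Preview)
Your argument is correct and follows the same overall strategy as the paper (evaluate $\theta^i$ at a primitive idempotent $f\in K[eMe]$ attached to $S$ via an apex $e$, then use a Vandermonde/polynomial argument). However, you have missed the key simplification that collapses your ``hard part'' to a one-line application of Lemma~\ref{groupalgidemp}: your set $N$ is simply $\{e\}$.

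Indeed, $eV$ is a \emph{faithful} $eMe$-module. For $m\in eMe$ one has $m=eme$, so $\rho(m)$ annihilates $(1-e)V$; if in addition $m\in N$, then $\rho(m)$ acts as the identity on $eV$, whence $\rho(m)=\rho(e)$ and faithfulness of $V$ gives $m=e$. (You essentially prove this for the idempotent power $m^{k}$ when you compare images and kernels with $e$, but the same comparison applies directly to $m$ itself, since $\rho(m)|_{(1-e)V}=0$ already.) This is exactly how the paper proceeds: it observes that $eV$ is faithful for $eMe$, invokes Lemma~\ref{charkernel} to get $M_1=\{e\}$, and then the Vandermonde system forces $b_1=c_e=0$, contradicting $c_e\neq 0$ obtained from Lemma~\ref{groupalgidemp} applied to $\pi(f)\in KG_e$.

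With $N=\{e\}$, your entire detour---showing $N\trianglelefteq G_e$, invoking R.~Steinberg's theorem to see that $N$ acts trivially on $eS$, passing to $K[G_e/N]$, and checking $KG_e\pi(f)\cong eS$---becomes superfluous (though it is all correct as written). In particular, your appeal to R.~Steinberg's theorem, while not circular, is avoidable: the paper's proof is self-contained and does not use the unrefined result as input. What remains of your argument after the simplification (namely $\pi(f)\neq 0$ because $f\notin KI_e$, and then Lemma~\ref{groupalgidemp} gives $c_e\neq 0$) is precisely the paper's proof.
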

\begin{proof}
Let $S$ be a simple $KM$-module and let $e\in M$ be an apex for $S$. Put $A=KM$ and let $R$ be the radical of $A$. Observe that $eAe=K[eMe]$.  As $eS\neq 0$, there is a primitive idempotent $f$ of $eAe$ such that $f$ is primitive in $A$ and $S\cong Af/Rf$ by Lemma~\ref{corner}.  Write \[f=\sum_{m\in eMe}c_mm.\]  By definition of an apex $I_eS=0$.  On the other hand, $fS\neq 0$ by Lemma~\ref{multiplicity}.  Thus $f\notin KI_e$.  Define a homomorphism $\varphi\colon eAe\to KG_e$ by \[\varphi(m) = \begin{cases} m, & \text{if}\ m\in G_e\\ 0, & \text{if}\ m\in I_e\end{cases}\] for $m\in eMe$ and note that $\ker\varphi=KI_e$. Therefore, \[\varphi(f)=\sum_{g\in G_e} c_gg\] is a nonzero idempotent of $KG_e$ and hence $c_e\neq 0$ by Lemma~\ref{groupalgidemp}.

Let $\theta_1,\ldots,\theta_r$ be the values taken on by $\theta$ and let \[M_j=\{m\in eMe\mid \theta(m)=\theta_j\}.\]  Without loss of generality assume that $\theta_1=\theta(e)$. Put \[b_j = \sum_{m\in M_j} c_m.\]
Suppose now that $(V^{\otimes i}:S)=0$ for all $0\leq i\leq r-1$.  We follow here the convention that $\theta_j^0=1$ even if $\theta_j=0$. Then by Lemma~\ref{multiplicity}, we have that \[0=\dim fV^{\otimes i} = \theta_{V^{\otimes i}}(f) = \sum_{m\in eMe}c_m\theta^i(m) = \sum_{j=1}^r\theta_j^i\sum_{m\in M_j}c_m=\sum_{j=1}^r\theta_j^ib_j\] for all $0\leq i\leq r-1$.  By nonsingularity of the Vandermonde matrix, we conclude that $b_j=0$ for all $1\leq j\leq r$.  By Lemma~\ref{restrict} we have that $M_1=\{m\in eMe\mid \theta_{eV}(m)=\dim eV\}$.  Because $V$ affords a faithful representation of $M$, it follows that $eV$ affords a faithful representation of $eMe$.  Lemma~\ref{charkernel} then implies that $M_1=\{e\}$.  Thus $0=b_1=c_e\neq 0$.  This contradiction concludes the proof.
\end{proof}

\begin{Rmk}
We need to include the trivial representation $V^{\otimes 0}$ because if $M$ is a monoid with a zero element $z$ and if $zV=0$, then $zV^{\otimes i}=0$ for all $i>0$ and so the trivial representation is not a composition factor of any positive tensor power of $V$.  The proof of Theorem~\ref{main} can be modified to show that if $S$ is not the trivial module, or if $M$ has no zero element, then $S$ appears as a composition factor of $V^{\otimes i}$ with $1\leq i\leq r$. The key point is that only the trivial representation can have the zero element of $M$ as an apex and so in either of these two cases, $\theta(e)\neq 0$.
\end{Rmk}

\begin{Rmk}\label{nobound}
If $G$ is a finite group, $K$ is a field of characteristic $0$ and $V$ is a finite dimensional $KG$-module affording a faithful representation of $G$ whose character takes on $r$ distinct values, then $\bigoplus_{i=0}^{r-1} V^{\otimes i}$ contains every simple $KG$-module as a composition factor by Brauer's theorem and hence is a faithful $KG$-module because $KG$ is semisimple.  We observe that the analogous result fails in a very strong sense for monoids.
Let $N_t=\{0,1,\ldots, t\}$ where $1$ is the identity and $xy=0$ for $x,y\in N_t\setminus \{1\}$.  Define a faithful two-dimensional representation $\rho\colon N_t\to M_2(\mathbb C)$ by
\[\rho(0) = \begin{bmatrix} 0 & 0\\ 0& 0\end{bmatrix},\ \rho(1) = \begin{bmatrix} 1 & 0\\ 0& 1\end{bmatrix},\ \rho(j) = \begin{bmatrix} 0 & j\\ 0& 0\end{bmatrix},\ \text{for}\ 2\leq j\leq t.\]
Let $V$ be the corresponding $\mathbb CN_t$-module.  The character of $\rho$ takes on $2$ values, $0$ and $1$. However, $V^{\otimes 0}\oplus V^{\otimes 1}$ is $3$-dimensional and so cannot be a faithful $\mathbb CN_t$-module for $t\geq 9$ by dimension considerations.  In fact, given any integer $k\geq 0$, we can choose $t$ sufficiently large so that $\bigoplus_{i=0}^k V^{\otimes i}$ is not a faithful $\mathbb CN_t$-module (again by dimension considerations).  Thus, the minimum $k$ such that $\bigoplus_{i=0}^k V^{\otimes i}$ is a faithful $\mathbb CN_t$-module cannot be bounded as a function of only the number of values assumed by the character $\theta_V$ (independently of the monoid in question).
\end{Rmk}

\begin{Rmk}
A monoid homomorphism $\varphi\colon M\to N$ is called an \emph{$\mathbf{LI}$-morphism} if $\varphi$ separates $e$ from $eMe\setminus \{e\}$ for all idempotents $e\in M$.  The proof of Theorem~\ref{main} only uses that the representation $\rho\colon M\to \mathrm{End}_K(V)$ afforded by $V$ is an $\mathbf{LI}$-morphism, and not that it is faithful. Hence one could obtain the conclusion of Theorem~\ref{main} under the  weaker hypothesis that the representation afforded by $V$ is an $\mathbf{LI}$-morphism.  However, if $\varphi\colon M'\to M''$ is a surjective $\mathbf{LI}$-morphism of finite monoids and $K$ is a field of characteristic $0$, then the induced algebra homomorphism $\varphi\colon KM'\to KM''$ has nilpotent kernel~\cite{AMSV} and hence each simple $KM'$-module is lifted from a simple $KM''$-module.  Thus applying Theorem~\ref{main} to $\rho(M)$ allows one to recover the result under the weaker hypothesis from the original result.
\end{Rmk}

\section{Symmetric powers}
Let $K$ be a field of characteristic $0$ and $V$ a vector space over $K$.  Then the symmetric group $S_d$ acts on the right of  $V^{\otimes d}$ by twisting, e.g., \[(v_1\otimes \cdots \otimes v_d)\sigma = v_{\sigma\inv(1)}\otimes \cdots \otimes v_{\sigma\inv(d)}.\]   The $d^{th}$-symmetric power is the coinvariant space \[\Sym^d(V)=V^{\otimes d}\otimes_{KS_d} K\] where $K$ is the trivial $KS_d$-module.   In characteristic $0$, one can identify $\Sym^d(V)$ with the symmetric tensors (the tensors fixed by $S_d$).  If $V$ is a $KM$-module, where $M$ is a monoid, then $\Sym^d(V)$ is naturally a $KM$-module due to the $KM$-$KS_d$-bimodule structure on $V^{\otimes d}$.  It is well known that if $\rho\colon M\to \mathrm{End}_K(V)$ is the representation afforded by $V$, then \[\theta_{\Sym^d(V)}(m)=h_d(\lambda_1,\ldots, \lambda_n)\] where $h_d(x_1,\ldots, x_n)$ is the complete symmetric polynomial of degree $d$,  $\dim V=n$ and $\lambda_1,\ldots,\lambda_n$ are the eigenvalues of $\rho(m)$ (in a fixed algebraic closure of $K$) with multiplicities, cf.~\cite[Page~77]{Fulton}.    We shall also need the well-known identity~\cite[Appendix~A]{Fulton}
\begin{equation}\label{genfunction}
\sum_{i=0}^{\infty}h_i(x_1,\ldots, x_n)t^i=\prod_{j=1}^n\frac{1}{1-tx_i}.
\end{equation}

\begin{Thm}\label{symmetricpower}
Let $K$ be a field of characteristic $0$, let $M$ be a finite monoid and let $V$ be a finite dimensional $KM$-module affording a faithful representation $\rho\colon M\to \mathrm{End}_K(V)$.  Then every simple $KM$-module is a composition factor of one of $\Sym^0(M),\ldots, \Sym^{r-1}(M)$ with $r=\dim V\cdot s$ where $s$ is the number of distinct characteristic polynomials of the elements $\rho(m)$ with $m\in M$.
\end{Thm}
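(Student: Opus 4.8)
The plan is to follow the proof of Theorem~\ref{main} verbatim up through the point where the character of the relevant power of $V$ is applied to a suitable primitive idempotent $f$, and then to replace the Vandermonde step by one based on the generating function \eqref{genfunction} and the order of a pole at $t=1$. So let $S$ be a simple $KM$-module with apex $e$, put $A=KM$ with radical $R$, and set $n=\dim V$. Exactly as in the proof of Theorem~\ref{main}: since $eAe=K[eMe]$ and $eS\neq 0$, Lemma~\ref{corner} produces a primitive idempotent $f=\sum_{m\in eMe}c_m m$ of $A$ with $S\cong Af/Rf$; the homomorphism $\varphi\colon eAe\to KG_e$ which is the identity on $G_e$ and zero on $I_e$ sends $f$ to a nonzero idempotent, so $c_e\neq 0$ by Lemma~\ref{groupalgidemp}; and, as in the proof of Theorem~\ref{main}, $eV$ affords a faithful representation of $eMe$ (because $\rho$ is faithful, $eMe$ annihilates $(1-e)V$, and $V=eV\oplus(1-e)V$). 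If $eV=0$ then $\rho$ vanishes on $eMe$, so $eMe=\{e\}$ by faithfulness, and then the case $d=0$ of the computation below reads $0=\theta_{\Sym^0(V)}(f)=c_e$, a contradiction; hence I may assume $n_0:=\dim eV\geq 1$.

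The analogue here of the equality $M_1=\{e\}$ from the proof of Theorem~\ref{main} is the following. For $m\in eMe$ the operator $\rho(m)$ annihilates $(1-e)V$ and maps $V$ into $eV$, so its characteristic polynomial on $V$ is that of $\rho(m)|_{eV}$ multiplied by $x^{n-n_0}$; in particular $\rho(e)$ has characteristic polynomial $(x-1)^{n_0}x^{n-n_0}$, and for every $m\in eMe$ the eigenvalue $1$ of $\rho(m)$ has multiplicity at most $n_0$, with equality if and only if $\rho(m)|_{eV}$ is unipotent. Since $eMe$ is finite, such a unipotent $\rho(m)|_{eV}$ has finite order and hence equals the identity (the last step of the proof of Lemma~\ref{charkernel}), whereupon faithfulness of $eV$ forces $m=e$. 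Partition $eMe=N_1\sqcup\cdots\sqcup N_t$ (so $t\leq s$) according to the characteristic polynomial of $\rho(m)$, labelled so that $e\in N_1$; then $N_1=\{e\}$, while for $k\neq 1$ the eigenvalue $1$ of the elements of $N_k$ has multiplicity strictly less than $n_0$. As $\theta_{\Sym^d(V)}(m)=h_d(\lambda_1,\dots,\lambda_n)$ depends only on the eigenvalue multiset of $\rho(m)$, it is constant on each $N_k$, say equal to $h_d^{(k)}$; set $\beta_k=\sum_{m\in N_k}c_m$, so that $\beta_1=c_e$.

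Now suppose, for contradiction, that $(\Sym^d(V):S)=0$ for all $0\leq d\leq r-1$, where $r=ns$. By Lemma~\ref{multiplicity}, $f\cdot \Sym^d(V)=0$, so
\[0=\theta_{\Sym^d(V)}(f)=\sum_{m\in eMe}c_m\,\theta_{\Sym^d(V)}(m)=\sum_{k=1}^{t}\beta_k h_d^{(k)}\qquad (0\leq d\leq r-1).\]
By \eqref{genfunction}, $\sum_{d\geq 0}h_d^{(k)}t^d=1/q_k(t)$ where $q_k(t)=\prod_{\lambda\neq 0}(1-t\lambda)$ is the product over the nonzero eigenvalues, with multiplicity, of any element of $N_k$; thus $q_k(0)=1$, $\deg q_k\leq n$, and $q_1(t)=(1-t)^{n_0}$. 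Consequently the rational function $F(t):=\sum_{k=1}^{t}\beta_k/q_k(t)$ has vanishing power-series coefficients in degrees $0,\dots,r-1$. Writing $F=P/D$ with $D=\prod_k q_k$ and $P=\sum_k\beta_k\prod_{l\neq k}q_l$, we have $D(0)=1$ and $\deg P\leq n(t-1)\leq n(s-1)=r-n<r$, so the identity $P=FD$ forces $F=0$ (otherwise the nonzero polynomial $P$ would have order $\geq r>\deg P$). Finally, since $(1-t)$ divides $q_k$ to order $<n_0$ for $k\neq 1$, the rational function $(1-t)^{n_0}F(t)$ is regular at $t=1$ with value $\beta_1$ there; as $F=0$ this gives $\beta_1=c_e=0$, contradicting $c_e\neq 0$. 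The genuinely new ingredient — replacing Brauer's appeal to the orthogonality relations, i.e.\ the Vandermonde step of Theorem~\ref{main} — is the analysis of $F$: deducing $F=0$ from finitely many vanishing coefficients via the degree bound on $P$, and then isolating $\beta_1$ from the order of the pole of $F$ at $t=1$; the latter relies on $e$ being the unique element of $eMe$ whose image has $1$ as an eigenvalue of multiplicity $n_0=\dim eV$, which is where faithfulness of $eV$ and the hypothesis $\mathrm{char}\,K=0$ enter, just as in Theorem~\ref{main}.
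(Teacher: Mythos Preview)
Your proof is correct and follows essentially the same route as the paper: both reduce to a Molien-type generating function $\sum_k \beta_k/q_k(t)$, use the degree bound on the common denominator (at most $ns=r$) to pass from ``first $r$ coefficients vanish'' to ``the rational function vanishes'' (the paper phrases this as a linear recurrence of depth $r$), and then isolate $\beta_1=c_e$ via the maximal-order pole at $t=1$, which is exactly the paper's observation that $t^nq_1(1/t)=(1-t)^{\dim eV}$ while the other denominators have strictly smaller order of vanishing there. One cosmetic issue: you use $t$ both for the formal variable and for the number of classes $N_1,\dots,N_t$, which makes expressions like $\sum_{k=1}^{t}\beta_k/q_k(t)$ and $\deg P\le n(t-1)$ read ambiguously---rename one of them.
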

\begin{proof}
We proceed as in the proof of Theorem~\ref{main}. Let $S$ be a simple $KM$-module and let $e\in M$ be an apex for $S$. Since $\Sym^0(V)$ is the trivial module, we may assume that $S$ is not the trivial module. Then $e$ is not the zero of $M$ (if it has one) and so $eV\neq 0$ because $\rho$ is faithful.  Put $A=KM$ and let $R$ be the radical of $A$. As $eS\neq 0$, there is a primitive idempotent $f$ of $eAe$ such that $f$ is primitive in $A$ and $S\cong Af/Rf$ by Lemma~\ref{corner}.  Write \[f=\sum_{m\in eMe}c_mm.\] The proof of Theorem~\ref{main} shows that $c_e\neq 0$.

Let $a_i = \dim f\Sym^i(V)$ and let $g(t)=\sum_{i=0}^{\infty} a_it^i$ be the corresponding generating function.  We prove that $g(t)$ is a non-zero rational function with denominator of degree at most $r$ by establishing a Molien type formula.

Let $n=\dim V$ and let $p_m(t)$ be the characteristic polynomial of $\rho(m)$ for $m\in M$.  Let $q_1(t),\ldots,q_s(t)$ be the $s$ characteristic polynomials of the endomorphisms $\rho(m)$ with $m\in M$.

Notice that $e\Sym^i(V) = \Sym^i(eV)$ as an $eAe$-module because $eV^{\otimes i} = (eV)^{\otimes i}$.  Let $\rho'\colon eMe\to \mathrm{End}_K(eV)$ be the representation afforded by $eV$.  Note that if $m\in eMe$, then
\begin{equation}\label{eq:switchtoe}
t^np_m(1/t)=\det(I-t\rho(m)) = \det(I-t\rho'(m))
\end{equation}
because if we write $V=eV\oplus (1-e)V$ and choose a basis accordingly, we then have the block form \[I-t\rho(m) =\begin{bmatrix} I-t\rho'(m) & 0\\ 0 & I\end{bmatrix}.\]  Let $M_j =\{m\in eMe\mid p_m(t)=q_j(t)\}$ and assume that $q_1(t)=p_e(t)$.  Let \[b_j = \sum_{m\in M_j}c_m.\] Note that if $M_j=\emptyset$, then $b_j=0$. Observe that \[t^nq_1(1/t) = \det(I-t\rho'(e)) = \det(I-tI)=(1-t)^k\] where $k=\dim V$. On the other hand, since $\rho'$ is faithful if $m\in eMe\setminus \{e\}$, by Lemma~\ref{charkernel} not all eigenvalues of $\rho'(m)$ are $1$. Therefore, $t^np_m(1/t)=\det(I-t\rho'(m))$ is a degree $k$ polynomial whose roots are not all equal to $1$.  In particular, $M_1=\{e\}$ and so $b_1=c_e\neq 0$.

Let $m\in eMe$ and let $\lambda_1,\ldots,\lambda_k$ be the eigenvalues of $\rho'(m)$ with multiplicities in a fixed algebraic closure of $K$.  Then, using \eqref{genfunction}, we have that
\begin{equation*}
\sum_{i=0}^{\infty} \theta_{\Sym^i(eV)}(m)t^i = \sum_{i=0}^{\infty}h_i(\lambda_1,\ldots,\lambda_k)t^i =\prod_{j=1}^k\frac{1}{1-t\lambda_i}=\frac{1}{\det(I-t\rho'(m))}.
\end{equation*}
Therefore, applying \eqref{eq:switchtoe},
\begin{align*}
g(t)&=\sum_{i=0}^{\infty}a_it^i=\sum_{i=0}^{\infty} \theta_{\Sym^i(V)}(f)t^i = \sum_{m\in eMe}c_m\sum_{i=0}^{\infty} \theta_{\Sym^i(eV)}(m)t^i
\\ &= \sum_{m\in eMe} \frac{c_m}{\det(I-t\rho'(m))} = \sum_{j=1}^s \frac{b_j}{t^nq_j(1/t)}= \frac{b_1}{(1-t)^k}+\sum_{j=2}^s\frac{b_j}{t^nq_j(1/t)}
\end{align*}

Since, for all $j=2,\ldots, s$ with $b_j\neq 0$, the polynomial $t^n(q_j(1/t))$ has degree $k$ and not all roots equal to $1$ and since $b_1=c_e\neq 0$, we conclude that $g(t)\neq 0$ and $g(t) = h(t)/q(t)$ where $\deg q(t)\leq ks\leq \dim V\cdot s=r$.  Thus the sequence $a_i$ is not identically zero and satisfies a recurrence of degree $r$, and hence there exists $0\leq i\leq r-1$ such that $a_i\neq 0$.  By Lemma~\ref{multiplicity} we conclude that $S$ is a composition factor of one of $\Sym^0(V),\ldots, \Sym^{r-1}(V)$.
\end{proof}

\begin{Rmk}
Using Newton's identities, the characteristic polynomial of $\rho(m)$ is determined by $\theta_V(m),\ldots, \theta_V(m^{n-1})$ where $n=\dim V$, and hence $s$ can be bounded in terms of the number of values assumed by $\theta_V$.
\end{Rmk}

\begin{Rmk}
Let $V$ and $N_t$ be as in Remark~\ref{nobound}.  Then there are only two distinct characteristic polynomials for elements of $N_t$ acting on $V$ because every non-identity element of $N_t$ acts as a nilpotent operator.  But, for any fixed $k$, $\bigoplus_{i=0}^{k}\Sym^i(V)$ cannot be a faithful  $\mathbb CN_t$-module for  $t$ sufficiently large by dimension considerations.  Thus the smallest $k$ giving a faithful module for the monoid algebra cannot be bounded in terms of just $\dim V$ and the number of different characteristic polynomials, as is the case for finite groups.\end{Rmk}

\section*{Acknowledgments}
Thanks are due to Nicholas Kuhn, who pointed out to me his results~\cite{genericrep,Kuhn2}, which led me to consider symmetric powers.


\def\malce{\mathbin{\hbox{$\bigcirc$\rlap{\kern-7.75pt\raise0,50pt\hbox{${\tt
  m}$}}}}}\def\cprime{$'$} \def\cprime{$'$} \def\cprime{$'$} \def\cprime{$'$}
  \def\cprime{$'$} \def\cprime{$'$} \def\cprime{$'$} \def\cprime{$'$}
  \def\cprime{$'$} \def\cprime{$'$}

\end{document}